\journal{Journal of Mathematical Analysis and Applications}
\newtheorem{theorem}{Theorem}[section]
\newtheorem{lemma}{Lemma}[section]
\newtheorem{corollary}{Corollary}[section]
\newtheorem{remark}{Remark}[section]
\newtheorem*{notation}{Notation}
\numberwithin{equation}{section}
\newcommand{\N}{\mathbb{N}}
\newcommand{\R}{\mathbb{R}}
\newcommand{\C}{\mathbb{C}}
\newcommand{\EE}{\mathsf{E}} 
\newcommand{\bb}[1]{\boldsymbol{#1}}
\newcommand{\rd}{\mathrm{d}}
\newcommand{\e}{\varepsilon}
\newcommand{\tr}{\mathrm{tr}}
\newcommand{\etr}{\mathrm{etr}}
\newcommand{\sbullet}{\scalebox{0.65}{$\bullet$}}
\begin{document}

\begin{frontmatter}

\title{On noncentral Wishart mixtures of noncentral Wisharts and \\
their use for testing random effects in factorial design models}

\author[a1]{Christian Genest}
\author[a2]{Anne MacKay}
\author[a1,a2,a3]{Fr\'ed\'eric Ouimet\corref{mycorrespondingauthor}}

\address[a1]{Department of Mathematics and Statistics, McGill University, Canada}
\address[a2]{D\'epartement de math\'ematiques, Universit\'e de Sherbrooke, Canada}
\address[a3]{D\'epartement de math\'ematiques et d'informatique, Universit\'e du Qu\'ebec \`a Trois-Rivi\`eres, Canada}

\cortext[mycorrespondingauthor]{Corresponding author. Email address: frederic.ouimet2@uqtr.ca}

\begin{abstract}
It is shown that a noncentral Wishart mixture of noncentral Wishart distributions with the same degrees of freedom yields a noncentral Wishart distribution, thereby extending the main result of Jones and Marchand [\textit{Stat}~\textbf{10} (2021), Paper No.~e398, 7~pp.] 
from the chi-square to the Wishart setting. To illustrate its use, this fact is then employed to derive the finite-sample distribution of test statistics for random effects in a two-factor factorial design model with $d$-dimensional normal data, thereby broadening the findings of Bilodeau [\textit{ArXiv} (2022), 6~pp.], who treated the case $d = 1$. 
The same approach makes it possible to test random effects in more general factorial design models.
\end{abstract}

\begin{keyword} 
Effects model, factorial design, goodness-of-fit, MANOVA, matrix-variate $F$ distribution, noncentral Wishart distribution
\MSC[2020]{Primary: 60E05 Secondary: 60E10, 62E15, 62F03, 62H10, 62H15, 62H25, 62K10, 62K15}
\end{keyword}

\end{frontmatter}

\section{Introduction}\label{sec:intro}

The Wishart distribution has long been a cornerstone of multivariate statistical analysis, underpinning methods for covariance estimation, hypothesis testing, and the analysis of quadratic forms arising from normally distributed data. With the growing need for flexible multivariate models, coupled with the demand for explicit distributional forms to facilitate inference and goodness-of-fit testing, mixtures of Wishart distributions have proven to be an effective tool to capture randomness in dependence structures. Their applicability spans diverse areas, including signal modeling in diffusion-weighted magnetic resonance imaging \citep{doi:10.1109/TMI.2007.907552, doi:10.1016/j.neuroimage.2007.03.074, MR2596585, doi:10.1007/978-3-319-61358-1_12}, the characterization of noise in density estimation on the cone of positive definite matrices in a deconvolution framework \citep{MR3012414, MR2838725}, clustering of time series data \citep{MR4524855}, the distribution of polarimetric synthetic aperture radar (PSAR) data \citep{doi:10.1109/TGRS.2016.2590145}, classification algorithms for PSAR data \citep{doi:10.1080/01431161.2022.2054293}, and the development of improved estimators for the inverse scatter matrix using unbiased risk estimation \citep{MR4260037}.

Of interest in this paper is a specific class of Wishart mixtures, where both the mixing and the mixed distributions are either Wishart or noncentral Wishart. Such mixtures naturally arise in the context of random-effect modeling of normal data within two-factor factorial designs, as detailed in the MANOVA application of Section~\ref{sec:application}, where the sum of outer-products associated with each factor follows a noncentral Wishart distribution, and the noncentrality matrix parameter itself is Wishart-distributed. After introducing the necessary definitions, notations, and background on multivariate statistics in Section~\ref{sec:definitions}, we show in Section~\ref{sec:results} that a noncentral Wishart mixture of noncentral Wishart distributions sharing the same degrees of freedom remains a noncentral Wishart distribution.

Our result extends to arbitrary dimension $d \geq 1$ an earlier finding of \citet{MR4319008}, who showed that a noncentral chi-square mixture of noncentral chi-square distributions with the same degrees of freedom is a noncentral chi-square distribution. The extension of this closure property to the more general matrix-variate case is significant in two ways. First, from a theoretical perspective, it deepens our understanding of the structural properties of the Wishart family. Second, and perhaps more importantly, it opens up new avenues for goodness-of-fit testing in multivariate analysis. In the aforementioned factorial design framework, for instance, derivations based on our results lead to exact finite-sample distributions for certain test statistics, providing a rigorous approach for testing the significance of $d$-dimensional factors with Gaussian random effects without relying on asymptotic approximations. The findings of \citet{arXiv:2103.13202}, who treated the case $d = 1$, are thereby extended to a fully multivariate context.

\section{Definitions and notations}\label{sec:definitions}

For any integer $d \in \mathbb{N} = \{1, 2, \ldots \}$, let $\mathcal{S}^d$, $\mathcal{S}_+^d$, and $\mathcal{S}_{++}^d$, be the spaces of real symmetric, nonnegative definite, and positive definite, matrices of size $d \times d$, respectively. Let the operator $\tr(\cdot)$ stand for the trace, $\mathrm{etr}(\cdot) = \exp\{\tr(\cdot)\}$ the exponential trace, and $|\cdot|$ the determinant.

The multivariate gamma function~$\Gamma_d$ is defined, for any $\beta \in \C$ such that $\textrm{Re}(\beta) > (d-1)/2$, by
\[
\Gamma_d(\beta) = \int_{\mathcal{S}_{++}^d} \etr(- X)  |X|^{\beta - (d + 1)/2} \rd X,
\]
which is a natural generalization to the cone $\mathcal{S}_{++}^d$ of the classical gamma function.

For any degree-of-freedom parameter $\nu \in (d-1,\infty)$, any scale matrix $\Sigma \in \mathcal{S}_{++}^d$, and any noncentrality matrix parameter $\Theta\in \R^{d\times d}$ such that $\Theta \Sigma^{-1}\in \mathcal{S}_+^d$, the probability density function of the corresponding noncentral Wishart distribution, henceforth denoted $\mathcal{W}_d(\nu,\Sigma,\Theta)$, is given, for all matrices $X \in \mathcal{S}_{++}^d$, by
\[
f_{\nu,\Sigma,\Theta}(X) = \frac{|X|^{\nu/2 - (d + 1)/2} \etr(-\Sigma^{-1}X/2)}{|2 \Sigma|^{\nu/2} \Gamma_d(\nu/2)} \etr(-\Theta/2)  {}_0F_1(\nu/2; \Theta\Sigma^{-1}X/4),
\]
where ${}_0F_1$ denotes a hypergeometric function of matrix argument; see, e.g., \citet[Definition~7.3.1]{MR652932}. Whenever a random matrix $\mathfrak{X}$ of size $d \times d$ follows this distribution, one writes $\mathfrak{X}\sim \mathcal{W}_d(\nu,\Sigma,\Theta)$ for short. When $\Theta = 0_{d \times d}$, one recovers the (central) Wishart distribution, and one writes $\mathfrak{X}\sim \mathcal{W}_d(\nu,\Sigma)$, omitting the noncentrality parameter. For additional information concerning the Wishart distribution, see, e.g., Section~3.2.1 of the book by~\citet{MR652932}.

\begin{remark}\label{rem:mgf}
By Theorem~10.3.3 of \citet{MR652932}, the moment generating function of $\mathfrak{X}\sim \mathcal{W}_d(\nu,\Sigma,\Theta)$ is given, for all $T\in \mathcal{S}^d$ such that $\Sigma^{-1} - 2 T\in \mathcal{S}_{++}^d$, by
\[
M_{\mathfrak{X}}(T)
\equiv \EE\{\etr(T \mathfrak{X})\}
= \frac{\etr\{T \Sigma(I_d - 2 T \Sigma)^{-1} \Theta\}}{|I_d - 2 T \Sigma|^{\nu/2}}.
\]
Alternatively, this expression can also be deduced from Theorem~3.5.3 of \citet{Gupta_Nagar_1999}, where the products $T \Sigma$ are incorrectly written as $\Sigma T$ due to a misordering of the matrices in the exponential trace near the end of their proof. This typo is consequential for our calculations and caused many headaches!
\end{remark}

\begin{remark}\label{rem:normal}
Given any $\nu\in \N$, $M\in \R^{\nu\times d}$, and $\Sigma\in \mathcal{S}_{++}^d$, the random matrix $\mathfrak{N}$ is said to have a matrix-variate normal distribution, written $\mathfrak{N}\sim \mathcal{N}_{\nu\times d}(M, I_{\nu} \otimes \Sigma)$, if $\mathrm{vec}(\mathfrak{N})\sim \mathcal{N}_{\nu d}\{\mathrm{vec}(M), I_{\nu} \otimes \Sigma\}$, where $\mathrm{vec}(\cdot)$ is the vectorization operator, $\mathcal{N}_{\nu d}$ is the $(\nu d)$-dimensional multivariate normal distribution, and $\otimes$ denotes the Kronecker product. It is well known that if $\mathfrak{N}\sim \mathcal{N}_{\nu\times d}(M, I_{\nu} \otimes \Sigma)$ with $\nu\geq d$, then $\mathfrak{N}^{\top}\mathfrak{N}\sim \mathcal{W}_d(\nu, \Sigma, \Sigma^{-1} M^{\top} M)$; see, e.g., \citet[Theorem~10.3.2]{MR652932}.
\end{remark}

The lemma below extends Theorem~10.3.5 of \citet{MR652932} from the matrix-variate normal setting to the Wishart setting. This result is most likely known but we were unable to find an explicit statement, so it is given below for completeness.

\begin{lemma}\label{lem:conjugation}
Let $\nu \in (d-1,\infty)$, $C$, $\Sigma \in \mathcal{S}_{++}^d$ and $\Theta\in \R^{d\times d}$ be given such that $\Theta \Sigma^{-1}\in \mathcal{S}_+^d$. If $\mathfrak{X}\sim \mathcal{W}_d(\nu, \Sigma, \Theta)$, then
\[
C \mathfrak{X} C\sim \mathcal{W}_d(\nu, C \Sigma C, C^{-1} \Theta C).
\]
\end{lemma}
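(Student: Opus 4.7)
The cleanest route is via moment generating functions, relying on the formula recalled in Remark~\ref{rem:mgf}. The plan is to show that the MGF of $C\mathfrak{X}C$ on the admissible domain coincides with that of a $\mathcal{W}_d(\nu, C\Sigma C, C^{-1}\Theta C)$ random matrix, and then invoke the uniqueness of MGFs in a neighborhood of the origin. A preliminary observation is that $(C^{-1}\Theta C)(C\Sigma C)^{-1} = C^{-1}\Theta \Sigma^{-1} C^{-1}$, which is symmetric and nonnegative definite whenever $\Theta \Sigma^{-1}$ is, so the target distribution is well defined.

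The first step is to use the cyclic property of the trace to write, for any $T \in \mathcal{S}^d$,
\[
M_{C\mathfrak{X}C}(T) = \EE\{\etr(T C \mathfrak{X} C)\} = \EE\{\etr(CTC\,\mathfrak{X})\} = M_{\mathfrak{X}}(CTC).
\]
One checks at once that $(C\Sigma C)^{-1} - 2T \in \mathcal{S}_{++}^d$ if and only if $\Sigma^{-1} - 2CTC \in \mathcal{S}_{++}^d$, by sandwiching with $C$, so the admissible domains for the two MGFs match.

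Substituting into Remark~\ref{rem:mgf} yields
\[
M_{C\mathfrak{X}C}(T) = \frac{\etr\{CTC\,\Sigma(I_d - 2CTC\,\Sigma)^{-1}\Theta\}}{|I_d - 2CTC\,\Sigma|^{\nu/2}}.
\]
For the denominator, two successive applications of the identity $|I_d - AB| = |I_d - BA|$ give $|I_d - 2CTC\Sigma| = |I_d - 2T C\Sigma C|$, which is exactly the denominator expected of $\mathcal{W}_d(\nu, C\Sigma C, C^{-1}\Theta C)$. For the numerator, set $A = CTC\Sigma$ and $B = TC\Sigma C$ and note that $B = C^{-1}AC$, hence $(I_d - 2B)^{-1} = C^{-1}(I_d - 2A)^{-1}C$; conjugating and using the cyclic property one obtains
\[
\tr\{T(C\Sigma C)(I_d - 2T C\Sigma C)^{-1}(C^{-1}\Theta C)\} = \tr\{C B(I_d - 2B)^{-1} C^{-1}\Theta\} = \tr\{A(I_d - 2A)^{-1}\Theta\},
\]
which matches the numerator above.

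The main obstacle, and really the only nonroutine point, is keeping the order of matrix products straight in the noncommuting numerator of the MGF: the symmetry of $C$ and $\Sigma$ is tempting to overuse, but $\Theta$ need not be symmetric, so the cyclic reordering must be performed carefully, exactly as flagged in Remark~\ref{rem:mgf}. Once the identity $B = C^{-1}AC$ is spotted, the computation collapses in one line, and the equality of MGFs on an open neighborhood of $0$ in $\mathcal{S}^d$ yields the conclusion.
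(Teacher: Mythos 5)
Your proof is correct and follows essentially the same route as the paper: compute $M_{C\mathfrak{X}C}(T)=M_{\mathfrak{X}}(CTC)$, then use the conjugation identity $(I_d-2TC\Sigma C)^{-1}=C^{-1}(I_d-2CTC\Sigma)^{-1}C$ together with cyclic invariance of the trace to match the MGF of $\mathcal{W}_d(\nu,C\Sigma C,C^{-1}\Theta C)$. Your additional checks that the domains of the two MGFs correspond under $T\mapsto CTC$ and that $(C^{-1}\Theta C)(C\Sigma C)^{-1}\in\mathcal{S}_+^d$ are welcome details the paper leaves implicit.
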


\begin{proof}
Using Remark~\ref{rem:mgf} and the invariance of the trace under cyclic permutations, note that, for every $T\in \mathcal{S}^d$ satisfying $\Sigma^{-1} - C T C\in \mathcal{S}_{++}^d$,
\[
M_{C \mathfrak{X} C}(T)
= M_{\mathfrak{X}}(C T C)
= \frac{\etr\{C T C \Sigma C C^{-1} (I_d - 2 C T C \Sigma)^{-1} C C^{-1} \Theta\}}{|I_d - 2 C T C \Sigma|^{\nu/2}}.
\]
Given that $C^{-1} (I_d - 2 C T C \Sigma)^{-1} C = \{C^{-1} (I_d - 2 C T C \Sigma) C\}^{-1} = (I_d - 2 T C \Sigma C)^{-1}$, one has
\[
M_{C \mathfrak{X} C}(T)
= \frac{\etr\{T C \Sigma C (I_d - 2 T C \Sigma C)^{-1} C^{-1} \Theta C\}}{|I_d - 2 T C \Sigma C|^{\nu/2}}.
\]
Using Remark~\ref{rem:mgf} again proves the claim of the lemma.
\end{proof}

\begin{notation}
For any matrix $R \in \R^{d \times d}$ and $P \in \mathcal{S}_+^d$, define the conjugation
\[
R_P = P^{1/2} R P^{1/2},
\]
where $P^{1/2}$ denotes the symmetric square root of $P$. This definition extends naturally to random matrices, i.e., $\mathfrak{R}_P = P^{1/2} \mathfrak{R} P^{1/2}$. This notation is used from hereon unless it leads to cumbersome sub-subscripts, in which case the expression is written explicitly. For example, Lemma~\ref{lem:conjugation} can be restated as follows: if $\mathfrak{X} \sim \mathcal{W}_d(\nu, \Sigma, \Theta)$, then $\mathfrak{X}_V \sim \mathcal{W}_d(\nu, \Sigma_V, V^{-1} \Theta_V)$ with $V = C^2$.
\end{notation}

\section{Results}\label{sec:results}

This paper's main result, Theorem~\ref{thm:main} below, states that a noncentral Wishart mixture of noncentral Wishart distributions --- where the degrees of freedom of both noncentral Wisharts are the same --- is itself a noncentral Wishart distribution with the same degrees of freedom. It generalizes the main finding (Result~1) of \citet{MR4319008} in two ways: by extending the chi-square setting ($d = 1$) to the Wishart setting ($d\geq 1$), and by allowing arbitrary scale parameters ($A,\Sigma\in \mathcal{S}_{++}^d$) for both the mixing and mixed random matrices.

\begin{theorem}\label{thm:main}
Let $\nu \in (d-1,\infty)$, $A$, $\Sigma$, $H \in \mathcal{S}_{++}^d$ and $\Delta\in \mathcal{S}_+^d$ be given. If
\[
\mathfrak{X} \mid \{\mathfrak{Y} = Y\} \sim \mathcal{W}_d(\nu, A, A^{-1/2} Y_H A^{1/2}), \quad \mathfrak{Y} \sim \mathcal{W}_d(\nu, \Sigma, \Sigma^{-1} \Delta),
\]
then $\mathfrak{X}\sim \mathcal{W}_d(\nu, V, V^{-1} A^{1/2} \Delta_H A^{1/2})$ with $V = A^{1/2} (I_d + \Sigma_H) A^{1/2}$.
\end{theorem}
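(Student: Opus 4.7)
The plan is to compute the moment generating function (MGF) of $\mathfrak{X}$ via the tower property $M_{\mathfrak{X}}(T) = \EE\{M_{\mathfrak{X}\mid\mathfrak{Y}}(T)\}$ and match it with the MGF of the claimed distribution $\mathcal{W}_d(\nu, V, V^{-1}A^{1/2}\Delta_H A^{1/2})$ supplied by Remark~\ref{rem:mgf}; since both MGFs are analytic in $T$, equality on a neighborhood of $0_{d\times d}$ will force the distributional identification. Throughout I restrict to $T \in \mathcal{S}^d$ close enough to $0_{d\times d}$ that every matrix inverse and MGF evaluation below is well defined.

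The first step is to apply Remark~\ref{rem:mgf} to the conditional distribution of $\mathfrak{X}$ given $\mathfrak{Y} = Y$ and simplify the exponent, which is linear in $Y$, using invariance of the trace under cyclic permutations together with the identity $A^{1/2}(I_d - 2TA)^{-1}TA^{1/2} = (I_d - 2T_A)^{-1}T_A$ with $T_A = A^{1/2}TA^{1/2}$ (obtained from the conjugation $A^{1/2}(I_d - 2TA)A^{-1/2} = I_d - 2T_A$). This gives
\[
M_{\mathfrak{X}\mid\mathfrak{Y} = Y}(T) = \frac{\etr(LY)}{|I_d - 2TA|^{\nu/2}}, \quad L := H^{1/2}(I_d - 2T_A)^{-1}T_A H^{1/2} \in \mathcal{S}^d,
\]
where symmetry of $L$ follows from the commutativity of $T_A$ with $(I_d - 2T_A)^{-1}$. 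Applying Remark~\ref{rem:mgf} a second time to evaluate $M_{\mathfrak{Y}}(L)$, and using the elementary commutation $L\Sigma(I_d - 2L\Sigma)^{-1}\Sigma^{-1} = (I_d - 2L\Sigma)^{-1}L$, yields
\[
M_{\mathfrak{X}}(T) = \frac{\etr\{(I_d - 2L\Sigma)^{-1}L\Delta\}}{|I_d - 2TA|^{\nu/2}\,|I_d - 2L\Sigma|^{\nu/2}}.
\]

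The remaining task is to match this with the target MGF, which after the analogous commutation reads $|I_d - 2TV|^{-\nu/2}\etr\{(I_d - 2TV)^{-1}TA^{1/2}\Delta_H A^{1/2}\}$. For the determinant factor, conjugation by $A^{1/2}$ gives $A^{1/2}(I_d - 2TV)A^{-1/2} = I_d - 2T_A(I_d + \Sigma_H)$; factoring this as $(I_d - 2T_A)[I_d - 2(I_d - 2T_A)^{-1}T_A\Sigma_H]$ and invoking Sylvester's identity $|I_d - XY| = |I_d - YX|$ collapses the product $|I_d - 2TA|\,|I_d - 2L\Sigma|$ onto $|I_d - 2TV|$. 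For the exponents, the same $A^{1/2}$-conjugation produces $A^{1/2}(I_d - 2TV)^{-1}TA^{1/2} = [I_d - 2T_A(I_d + \Sigma_H)]^{-1}T_A$, while a parallel $H^{1/2}$-conjugation on the mixing side identifies $(I_d - 2L\Sigma)^{-1}L$ with $H^{1/2}[I_d - 2T_A(I_d + \Sigma_H)]^{-1}T_A H^{1/2}$, so both traces reduce via one final cyclic permutation to the common canonical value $\tr\{\Delta_H [I_d - 2T_A(I_d + \Sigma_H)]^{-1}T_A\}$.

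The main obstacle I anticipate is not conceptual but organizational: one must spot the correct pair of conjugations (by $A^{1/2}$ and by $H^{1/2}$) that recasts every quantity in terms of the single symmetric matrix $T_A$, and then carefully verify that the apparently mismatched trailing factors $\Sigma^{-1}\Delta$ on the mixing side and $V^{-1}A^{1/2}\Delta_H A^{1/2}$ on the target side reconcile only after the noncommutative reshufflings are handled in the correct order — a delicacy flagged explicitly by the authors in Remark~\ref{rem:mgf}.
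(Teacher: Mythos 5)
Your proposal is correct and follows essentially the same route as the paper's first proof: a tower-property computation of the moment generating function, two applications of Remark~\ref{rem:mgf}, conjugation by $A^{1/2}$ (and by $H^{1/2}$, which the paper packages instead as Lemma~\ref{lem:conjugation} applied to $\mathfrak{Y}_H$), the factorization $I_d - 2T_A(I_d+\Sigma_H) = (I_d-2T_A)\{I_d - 2(I_d-2T_A)^{-1}T_A\Sigma_H\}$, and analyticity of the MGF to conclude. The only difference is organizational: you evaluate $M_{\mathfrak{Y}}(L)$ directly at $L = H^{1/2}(I_d-2T_A)^{-1}T_AH^{1/2}$, whereas the paper evaluates $M_{\mathfrak{Y}_H}(\widetilde T)$ at $\widetilde T = T_A(I_d-2T_A)^{-1}$, which is the same quantity.
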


\begin{remark}
In Theorem~\ref{thm:main}, the conclusion can also be stated, using Lemma~\ref{lem:conjugation}, as
\[
V^{-1/2} \mathfrak{X} V^{-1/2}\sim \mathcal{W}_d \{\nu, I_d, V^{1/2} (V^{-1} A^{1/2} \Delta_H A^{1/2}) V^{-1/2}\} \equiv \mathcal{W}_d(\nu, I_d, V^{-1/2} A^{1/2} \Delta_H A^{1/2} V^{-1/2}).
\]
Letting the scale matrices $A$ and $\Sigma$ be trivial, i.e., $A = \Sigma = I_d$, the above reduces to
\[
(I_d + H)^{-1/2} \mathfrak{X} (I_d + H)^{-1/2}\sim \mathcal{W}_d\{\nu, I_d, (I_d + H)^{-1/2} \Delta_H (I_d + H)^{-1/2}\}.
\]
When $d = 1$, Result~1 of \citet{MR4319008} for noncentral chi-square mixtures is recovered, viz., if $h \in (0, \infty)$, $\delta \in [0, \infty)$, $X \mid \{Y = y\} \sim \chi^2_{\nu}(y h)$ and $Y\sim \chi_{\nu}^2 (\delta)$, then $X / (1 + h)\sim \chi_{\nu}^2\{h \delta / (1 + h)\}$.
\end{remark}

\begin{proof}[First proof of Theorem~\ref{thm:main}]
By conditioning on the value of $\mathfrak{Y}$, applying Remark~\ref{rem:mgf}, and using the invariance of the trace operator under cyclic permutations, one has, for every $T\in \mathcal{S}^d$ with $A^{-1} - 2 T\in \mathcal{S}_{++}^d$,
\[
\begin{aligned}
M_{\mathfrak{X}}(T)
&= \EE\big[\EE\{\etr(T \mathfrak{X}) \mid \mathfrak{Y}\}\big] \\
&= \EE\left[\frac{\etr\{T A (I_d - 2 T A)^{-1} A^{-1/2} \mathfrak{Y}_H A^{1/2}\}}{|I_d - 2 T A|^{\nu/2}}\right] \\
&= \EE\left[\frac{\etr\{A^{1/2} T A^{1/2} A^{1/2} (I_d - 2 T A)^{-1} A^{-1/2} \mathfrak{Y}_H\}}{|I_d - 2 A^{1/2} T A^{1/2}|^{\nu/2}}\right].
\end{aligned}
\]

Given that $A^{1/2} (I_d - 2 T A)^{-1} A^{-1/2} = (I_d - 2 A^{1/2} T A^{1/2})^{-1} = (I_d - 2 T_A)^{-1} $, one deduces that
\[
M_{\mathfrak{X}}(T)
= \EE\left[\frac{\etr\{T_A (I_d - 2 T_A)^{-1} \mathfrak{Y}_H\}}{|I_d - 2 T_A|^{\nu/2}}\right].
\]
Note that in view of Lemma~\ref{lem:conjugation},
\[
\mathfrak{Y}_H \equiv H^{1/2} \mathfrak{Y} H^{1/2} \sim \mathcal{W}_d\{\nu, \Sigma_H, (\Sigma_H)^{-1} \Delta_H\}.
\]

By applying Remark~\ref{rem:mgf} with $\widetilde{T} \equiv T_A (I_d - 2 T_A)^{-1}$ and using the invariance of the trace operator under cyclic permutations, one finds that, for every $T\in \mathcal{S}^d$ satisfying $A^{-1} - 2 T\in \mathcal{S}_{++}^d$ and $(\Sigma_H)^{-1} - 2 \widetilde{T}\in \mathcal{S}_{++}^d$,
\[
\begin{aligned}
M_{\mathfrak{X}}(T)
&= |I_d - 2 T_A|^{-\nu/2} M_{\mathfrak{Y}_H}(\widetilde{T}) \\
&= |I_d - 2 T_A|^{-\nu/2} \frac{\etr\{\widetilde{T} \Sigma_H (I_d - 2 \widetilde{T} \Sigma_H)^{-1} (\Sigma_H)^{-1} \Delta_H\}}{|I_d - 2 \widetilde{T} \Sigma_H|^{\nu/2}} \\
&= |I_d - 2 T_A|^{-\nu/2} \frac{\etr\{\widetilde{T} (I_d - 2 \Sigma_H \widetilde{T})^{-1} \Delta_H\}}{|I_d - 2 \Sigma_H \widetilde{T}|^{\nu/2}}.
\end{aligned}
\]

Next, notice that
\[
\begin{aligned}
I_d - 2 \Sigma_H \widetilde{T}
&= I_d - 2 \Sigma_H T_A (I_d - 2 T_A)^{-1} \\
&= \{(I_d - 2 T_A) - 2 \Sigma_H T_A\} (I_d - 2 T_A)^{-1} \\
&= \{I_d - 2 (I_d + \Sigma_H) T_A\} (I_d - 2 T_A)^{-1}.
\end{aligned}
\]
Therefore, for every $T\in \mathcal{S}^d$ satisfying $A^{-1} - 2 T\in \mathcal{S}_{++}^d$ and $(\Sigma_H)^{-1} - 2 \widetilde{T}\in \mathcal{S}_{++}^d$, one has
\[
\begin{aligned}
M_{\mathfrak{X}}(T)
&= \frac{\etr\big[T_A \{I_d - 2 (I_d + \Sigma_H) T_A\}^{-1} \Delta_H\big]}{|I_d - 2 (I_d + \Sigma_H) T_A|^{\nu/2}} \\
&= \frac{\etr\big[T_A \{(I_d + \Sigma_H) A^{1/2}\} \{I_d - 2 T A^{1/2} (I_d + \Sigma_H) A^{1/2}\}^{-1} \{(I_d + \Sigma_H) A^{1/2}\}^{-1} A^{-1/2} A^{1/2} \Delta_H\big]}{|I_d - 2 T A^{1/2} (I_d + \Sigma_H) A^{1/2}|^{\nu/2}} \\
&= \frac{\etr\big[T V (I_d - 2 T V)^{-1} V^{-1} A^{1/2} \Delta_H A^{1/2}\big]}{|I_d - 2 T V|^{\nu/2}},
\end{aligned}
\]
where $V = A^{1/2} (I_d + \Sigma_H) A^{1/2}$. Invoking Remark~\ref{rem:mgf} again, together with the analyticity of the moment-generating function, one can verify the claim.
\end{proof}

Using the relationship between the matrix-variate normal and noncentral Wishart distributions described in Remark~\ref{rem:normal} when the degree-of-freedom parameter $\nu$ is an integer, one obtains the following second proof of Theorem~\ref{thm:main} in that case.

\begin{proof}[Second proof of Theorem~\ref{thm:main}, assuming that $\nu\geq d$ is an integer]
Let $M\in \R^{\nu\times d}$ be any matrix such that $M^{\top} M = \Delta$. Consider
\[
\mathfrak{Z} \mid \{\mathfrak{W} = W\}\sim \mathcal{N}_{\nu\times d}(W H^{1/2} A^{1/2}, I_{\nu} \otimes A), \quad \mathfrak{W}\sim \mathcal{N}_{\nu\times d}(M, I_{\nu} \otimes \Sigma).
\]

Using Remark~\ref{rem:normal} and the notation $Y = W^{\top} W$, one recovers the Wishart setting in the statement of the theorem, viz.
\[
\mathfrak{X} \mid \{\mathfrak{W} = W\} = \mathfrak{Z}^{\top} \mathfrak{Z} \mid \{\mathfrak{W} = W\}\sim \mathcal{W}_d(\nu, A, A^{-1/2} Y_H A^{1/2}), \quad \mathfrak{Y} = \mathfrak{W}^{\top} \mathfrak{W}\sim \mathcal{W}_d(\nu, \Sigma, \Sigma^{-1} \Delta).
\]

By Theorem~2.3.10 of \citet{Gupta_Nagar_1999}, note that
\[
\mathfrak{W} H^{1/2} A^{1/2}\sim \mathcal{N}_{\nu\times d}(M H^{1/2} A^{1/2}, I_{\nu} \otimes A^{1/2} \Sigma_H A^{1/2}).
\]
Hence, the above yields $(\mathfrak{Z} - \mathfrak{W} H^{1/2} A^{1/2}) \mid \{\mathfrak{W} = W\}\sim \mathcal{N}_{\nu\times d}(0_{\nu\times d}, I_{\nu} \otimes A)$, which implies that
\[
(\mathfrak{Z} - \mathfrak{W} H^{1/2} A^{1/2})\sim \mathcal{N}_{\nu\times d}(0_{\nu\times d}, I_{\nu} \otimes A) \quad \text{and} \quad (\mathfrak{Z} - \mathfrak{W} H^{1/2} A^{1/2})\perp \mathfrak{W}.
\]

The sum of two independent matrix-variate normals is still matrix-variate normal (this is a trivial consequence of the definition of the matrix-variate normal distribution in Remark~\ref{rem:normal} seen through the lens of the vectorization operation), so one deduces
\[
\mathfrak{Z} = (\mathfrak{Z} - \mathfrak{W} H^{1/2} A^{1/2}) + \mathfrak{W} H^{1/2} A^{1/2}\sim \mathcal{N}_{\nu\times d}\big[M H^{1/2} A^{1/2}, I_{\nu} \otimes \{A^{1/2} (I_d + \Sigma_H) A^{1/2}\}\big].
\]
It follows that
\[
\mathfrak{X} = \mathfrak{Z}^{\top} \mathfrak{Z}\sim \mathcal{W}_d(\nu, V, V^{-1} A^{1/2} \Delta_H A^{1/2}),
\]
where $V = A^{1/2} (I_d + \Sigma_H) A^{1/2}$. This concludes the proof.
\end{proof}

In analogy with the central chi-square case mentioned in Corollary~1 of \citet{MR4319008}, the special case of zero noncentrality parameter ($\Delta = 0_{d \times d}$) in Theorem~\ref{thm:main} implies that a central Wishart mixture of noncentral Wisharts is distributed as a central Wishart. This is formally stated below.

\begin{corollary}\label{cor:main}
Let $\nu \in (d-1,\infty)$ and $A,\Sigma,H\in \mathcal{S}_{++}^d$ be given. If
\[
\mathfrak{X} \mid \{\mathfrak{Y} = Y\} \sim \mathcal{W}_d(\nu, A, A^{-1/2} Y_H A^{1/2}), \qquad \mathfrak{Y} \sim \mathcal{W}_d(\nu,\Sigma),
\]
then $\mathfrak{X}\sim \mathcal{W}_d\{\nu, A^{1/2} (I_d + \Sigma_H) A^{1/2}\}$.
\end{corollary}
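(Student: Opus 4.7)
The plan is to recognize this corollary as the direct specialization of Theorem~\ref{thm:main} to the case $\Delta = 0_{d\times d}$. Since, by our convention, the central Wishart distribution $\mathcal{W}_d(\nu, \Sigma)$ coincides with $\mathcal{W}_d(\nu, \Sigma, 0_{d\times d})$, the hypotheses of the corollary are precisely those of Theorem~\ref{thm:main} with this zero noncentrality matrix for $\mathfrak{Y}$, and no additional verification is required at the level of the assumptions.

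Next, I would simply track how the conclusion of Theorem~\ref{thm:main} simplifies. The theorem provides that $\mathfrak{X}\sim \mathcal{W}_d(\nu, V, V^{-1} A^{1/2} \Delta_H A^{1/2})$ with $V = A^{1/2}(I_d + \Sigma_H) A^{1/2}$. Setting $\Delta = 0_{d\times d}$ yields $\Delta_H = H^{1/2} \Delta H^{1/2} = 0_{d\times d}$, so the noncentrality matrix parameter in the conclusion vanishes, while the scale matrix $V$ is unaffected. One therefore obtains $\mathfrak{X}\sim \mathcal{W}_d(\nu, V)$, which is exactly the claim.

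Since the result is a direct corollary, there is no genuine obstacle to overcome; the entire proof amounts to verifying that the noncentrality drops out while the scale remains. As an alternative self-contained route, one could repeat the moment-generating function computation used in the first proof of Theorem~\ref{thm:main}: with $\Delta = 0_{d\times d}$, the exponential-trace factor coming from $M_{\mathfrak{Y}_H}(\widetilde{T})$ disappears, leaving only determinants, and the same algebraic identity
\[
I_d - 2\Sigma_H \widetilde{T} = \{I_d - 2(I_d + \Sigma_H) T_A\}(I_d - 2 T_A)^{-1}
\]
immediately produces $|I_d - 2 T V|^{-\nu/2}$ as the resulting MGF, with $V = A^{1/2}(I_d + \Sigma_H) A^{1/2}$. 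This redo, however, would merely duplicate calculations already carried out, so invoking Theorem~\ref{thm:main} directly is the cleaner path.
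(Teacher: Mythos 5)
Your proposal is correct and matches the paper exactly: the corollary is obtained by setting $\Delta = 0_{d\times d}$ in Theorem~\ref{thm:main}, noting that $\Sigma^{-1}\Delta = 0_{d\times d}$ recovers the central Wishart hypothesis on $\mathfrak{Y}$ and that $\Delta_H = 0_{d\times d}$ makes the noncentrality parameter of the conclusion vanish while the scale $V = A^{1/2}(I_d + \Sigma_H)A^{1/2}$ is unchanged. The paper itself treats this as an immediate specialization and offers no separate proof, so your argument is, if anything, slightly more explicit than the original.
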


\section{Application: Testing random effects in multivariate factorial design models}\label{sec:application}

\subsection{Methodology}\label{sec:methodology}

This section illustrates the practical implications of the theoretical results derived in Section \ref{sec:results} by applying them to the analysis of a two-factor factorial design model with multivariate observations. In the univariate setting, \citet{arXiv:2103.13202} showed that the standard $F$ statistics used to test for fixed effects in a two‐factor factorial design for normal data remain valid when random effects are introduced. We extend this result to higher dimensions through Corollary~\ref{cor:main}. While the discussion is restricted to the case of two factors for the sake of clarity, the extension to models with any number of factors is straightforward.

Generalizing Section~5.3.1 of \citet{MR2552961} to a multivariate framework, consider the two-factor factorial design model with fixed effects
\begin{equation}\label{eq:model}
\bb{Y}_{ijk} = \bb{\mu} + \bb{\alpha}_i + \bb{\beta}_j + (\bb{\alpha\beta})_{ij} + \bb{\e}_{ijk}, \quad i\in \{1,\ldots,a\}, ~j\in \{1,\ldots,b\}, ~k\in \{1,\ldots,n\},
\end{equation}
where $\bb{\mu}$, $\bb{\alpha}_i$, $\bb{\beta}_j$, and $(\bb{\alpha\beta})_{ij}$ are fixed $d$-dimensional vectors, and the errors are normally distributed and independent, i.e., $\smash{\bb{\e}_{ijk} \stackrel{\mathrm{iid}}{\sim} \mathcal{N}_d(\bb{0}_d, \Sigma)}$ for some scale matrix $\Sigma\in \mathcal{S}_{++}^d$. To ensure that the model parameters are identifiable, the following constraints are imposed for all $(i,j)\in \{1,\ldots,a\} \times \{1,\ldots,b\}$:
\[
\overline{\bb{\alpha}}_{\sbullet} = \overline{\bb{\beta}}_{\sbullet} = \overline{(\bb{\alpha\beta})}_{i\sbullet} = \overline{(\bb{\alpha\beta})}_{\sbullet j} = \overline{(\bb{\alpha\beta})}_{\sbullet\sbullet} = \bb{0}_d.
\]
This is accomplished by absorbing all means into $\bb{\mu}$. The overline symbol $\overline{\bb{v}}$ over a vector $\bb{v}$ (or an array) stands for averaging and the dotted subscripts indicate over which indices the average is taken.

Extending \citet[p.~169]{MR2552961}, the corresponding MANOVA orthogonal decomposition for the total sum-of-outer-products (SOP) matrix, $\mathrm{SOP}_{\mathrm{total}}$, is
\[
\mathrm{SOP}_{\mathrm{total}} = \mathrm{SOP}_A + \mathrm{SOP}_B + \mathrm{SOP}_{AB} + \mathrm{SOP}_{\mathrm{E}},
\]
where
\[
\begin{aligned}
&\mathrm{SOP}_{\mathrm{total}} = \sum_{i=1}^a \sum_{j=1}^b \sum_{k=1}^n (\bb{Y}_{ijk} - \overline{\bb{Y}}_{\sbullet\sbullet\sbullet}) (\bb{Y}_{ijk} - \overline{\bb{Y}}_{\sbullet\sbullet\sbullet})^{\top}, \\
&\mathrm{SOP}_A = b n \sum_{i=1}^a (\overline{\bb{Y}}_{i \sbullet \sbullet} - \overline{\bb{Y}}_{\sbullet \sbullet \sbullet}) (\overline{\bb{Y}}_{i \sbullet \sbullet} - \overline{\bb{Y}}_{\sbullet \sbullet \sbullet})^{\top}, \\
&\mathrm{SOP}_B = a n \sum_{j=1}^b (\overline{\bb{Y}}_{\sbullet j \sbullet} - \overline{\bb{Y}}_{\sbullet \sbullet \sbullet}) (\overline{\bb{Y}}_{\sbullet j \sbullet} - \overline{\bb{Y}}_{\sbullet \sbullet \sbullet})^{\top}, \\
&\mathrm{SOP}_{AB} = n \sum_{i=1}^a \sum_{j=1}^b (\overline{\bb{Y}}_{ij\sbullet} - \overline{\bb{Y}}_{i\sbullet\sbullet} - \overline{\bb{Y}}_{\sbullet j\sbullet} + \overline{\bb{Y}}_{\sbullet\sbullet\sbullet}) (\overline{\bb{Y}}_{ij\sbullet} - \overline{\bb{Y}}_{i\sbullet\sbullet} - \overline{\bb{Y}}_{\sbullet j\sbullet} + \overline{\bb{Y}}_{\sbullet\sbullet\sbullet})^{\top}, \\
&\mathrm{SOP}_{\mathrm{E}} = \sum_{i=1}^a \sum_{j=1}^b \sum_{k=1}^n (\bb{Y}_{ijk} - \overline{\bb{Y}}_{ij\sbullet}) (\bb{Y}_{ijk} - \overline{\bb{Y}}_{ij\sbullet})^{\top}.
\end{aligned}
\]
The matrices $\mathrm{SOP}_A$, $\mathrm{SOP}_B$, $\mathrm{SOP}_{AB}$, and $\mathrm{SOP}_{\mathrm{E}}$ represent the ``portions'' of the total variability explained by the factors $A$, $B$, the interaction factor $AB$, and the errors, respectively.

An application of Remark~\ref{rem:normal} yields
\begin{equation}\label{eq:fixed.case.laws}
\begin{aligned}
&\mathfrak{S} \equiv \mathrm{SOP}_A \sim \mathcal{W}_d(a-1, \Sigma, \Sigma^{-1} F), \\
&\mathfrak{T} \equiv \mathrm{SOP}_B \sim \mathcal{W}_d(b-1, \Sigma, \Sigma^{-1} G), \\
&\mathfrak{U} \equiv \mathrm{SOP}_{AB} \sim \mathcal{W}_d\{(a-1)(b-1), \Sigma, \Sigma^{-1} H), \\
&\mathfrak{V} \equiv \mathrm{SOP}_{\mathrm{E}} \sim \mathcal{W}_d\{a b (n-1), \Sigma\},
\end{aligned}
\end{equation}
where the random matrices $\mathfrak{S},\mathfrak{T},\mathfrak{U},\mathfrak{V}$ are mutually independent, and
\[
F = b n \sum_{i=1}^a \bb{\alpha}_i \bb{\alpha}_i^{\top}, \quad
G = a n \sum_{j=1}^b \bb{\beta}_j \bb{\beta}_j^{\top}, \quad
H = n \sum_{i=1}^a \sum_{j=1}^b (\bb{\alpha\beta})_{ij} (\bb{\alpha\beta})_{ij}^{\top}.
\]
In this fixed-effects framework the usual questions concern equality of mean vectors. Formally, one tests
\[
\mathcal{H}_0^{A}: \boldsymbol{\alpha}_1=\dots=\boldsymbol{\alpha}_a = \bb{0}_d, \qquad
\mathcal{H}_0^{B}: \boldsymbol{\beta}_1=\dots=\boldsymbol{\beta}_b = \bb{0}_d, \qquad
\mathcal{H}_0^{AB}: (\boldsymbol{\alpha\beta})_{ij} = \bb{0}_d .
\]

Because the four matrices in~\eqref{eq:fixed.case.laws} are mutually independent central Wisharts under these null hypotheses, any test statistic that is a symmetric function of the eigenvalues of $\mathfrak{S}_{\Sigma^{-1}} (\mathfrak{V}_{\Sigma^{-1}})^{-1}$, $\mathfrak{T}_{\Sigma^{-1}} (\mathfrak{V}_{\Sigma^{-1}})^{-1}$, or $\mathfrak{U}_{\Sigma^{-1}} (\mathfrak{V}_{\Sigma^{-1}})^{-1}$ possesses a tractable null distribution. The most common choices are Wilks' lambda, Pillai's trace, the Hotelling--Lawley trace, and Roy's largest root; exact null distributions or $F$-approximations for these statistics were developed by \citet{doi:10.1093/biomet/24.3-4.471}, \citet{MR67429}, \citet{doi:10.2307/2332232}, and \citet{MR57519}, and are presented in standard references such as \citet{MR1990662}, \citet{MR560319}, and \citet{MR2962097}. Each procedure addresses the same fundamental problem: are mean responses identical across the fixed treatment groups?

The situation changes radically when some of the factors $A$, $B$, and $AB$ have random effects. Assuming, for simplicity, that they all do (the extension to the cases where some of the factors have fixed effects is trivial), the assumptions in~\eqref{eq:model} become
\[
\bb{\alpha}_i \stackrel{\mathrm{iid}}{\sim} \mathcal{N}_d(\bb{0}_d,\Sigma_{\bb{\alpha}}), \quad \bb{\beta}_j \stackrel{\mathrm{iid}}{\sim} \mathcal{N}_d(\bb{0}_d,\Sigma_{\bb{\beta}}), \quad (\bb{\alpha\beta})_{ij} \stackrel{\mathrm{iid}}{\sim} \mathcal{N}_d(\bb{0}_d,\Sigma_{\bb{\alpha\beta}}), \quad \bb{\e}_{ijk} \stackrel{\mathrm{iid}}{\sim} \mathcal{N}_d(\bb{0}, \Sigma),
\]
with mutual independence between the four sequences. In particular, this yields
\[
\begin{aligned}
&\mathfrak{S} \mid \{\mathfrak{F} = F\} \sim \mathcal{W}_d(a-1, \Sigma, \Sigma^{-1} F), \\
&\mathfrak{T} \mid \{\mathfrak{G} = G\} \sim \mathcal{W}_d(b-1, \Sigma, \Sigma^{-1} G), \\
&\mathfrak{U} \mid \{\mathfrak{H} = H\} \sim \mathcal{W}_d\{(a-1)(b-1), \Sigma, \Sigma^{-1} H\}, \\
&\mathfrak{V} \sim \mathcal{W}_d\{a b (n-1), \Sigma\}.
\end{aligned}
\]
with
\[
\begin{aligned}
&\mathfrak{F} = b n \sum_{i=1}^a \bb{\alpha}_i \bb{\alpha}_i^{\top} \sim \mathcal{W}_d(a-1, b n \Sigma_{\bb{\alpha}}), \\
&\mathfrak{G} = a n \sum_{j=1}^b \bb{\beta}_j \bb{\beta}_j^{\top} \sim \mathcal{W}_d(b-1, a n \Sigma_{\bb{\beta}}), \\
&\mathfrak{H} = n \sum_{i=1}^a \sum_{j=1}^b (\bb{\alpha\beta})_{ij} (\bb{\alpha\beta})_{ij}^{\top} \sim \mathcal{W}_d\{(a-1)(b-1), n \Sigma_{\bb{\alpha\beta}}\}.
\end{aligned}
\]
Now the scientific interest shifts to testing
\begin{equation}\label{eq:hyp}
\mathcal{H}_0^{A}: \Sigma_{\boldsymbol\alpha} = 0_{d\times d}, \qquad
\mathcal{H}_0^{B}: \Sigma_{\boldsymbol\beta} = 0_{d\times d}, \qquad
\mathcal{H}_0^{AB}: \Sigma_{\boldsymbol{\alpha\beta}} = 0_{d\times d},
\end{equation}
namely, whether the factors contribute any covariance to the multivariate response.

In this random-effects model, the between-group sums-of-outer-products become mixtures of non-central Wisharts; Wilks' $\Lambda$, Pillai's trace and the other classical MANOVA statistics thus lose their known reference distributions. While \citet{arXiv:2103.13202} proved that the univariate $F$~test remains exact in this setting when $d=1$, no multivariate analog was previously available. Theorem~\ref{thm:main} and Corollary~\ref{cor:main} fill this gap by showing that the three test statistics in~\eqref{eq:random.case.statistics} follow an exact matrix-variate Beta Type~II distribution.

Hence, the present work delivers, for the first time, finite-sample tests that detect covariance components invisible to classical mean-based MANOVA. The comparison between our new multivariate methodology and the univariate ANOVAs proposed by \citet{arXiv:2103.13202} is illustrated through two real-data examples in Sections~\ref{sec:example.1}--\ref{sec:example.2}. These approaches are best viewed as complementary, as each can reveal aspects of the data that the other may overlook.

Upon conditioning on the random effects $\bb{\alpha}_i$, $\bb{\beta}_j$, and $(\bb{\alpha\beta})_{ij}$, the model reverts to the fixed effects model \eqref{eq:model}. The conditional joint density of $(\mathfrak{S},\mathfrak{T},\mathfrak{U},\mathfrak{V})$ given $\{(\mathfrak{F},\mathfrak{G},\mathfrak{H}) = (F,G,H)\}$ is
\[
f_{\mathfrak{S},\mathfrak{T},\mathfrak{U},\mathfrak{V} \mid \mathfrak{F},\mathfrak{G},\mathfrak{H}}(S,T,U,V \mid F,G,H) = f_{\mathfrak{S} \mid \mathfrak{F}}(S \mid F) \times f_{\mathfrak{T} \mid \mathfrak{G}}(T \mid G) \times f_{\mathfrak{U} \mid \mathfrak{H}}(U \mid H) \times f_{\mathfrak{V}}(V).
\]
Integrating over the density of $(\mathfrak{F},\mathfrak{G},\mathfrak{H})$ yields the unconditional density of $(\mathfrak{S},\mathfrak{T},\mathfrak{U},\mathfrak{V})$, viz.
\[
\begin{aligned}
f_{\mathfrak{S},\mathfrak{T},\mathfrak{U},\mathfrak{V}}(S,T,U,V)
&= \int_{\mathcal{S}_{++}^d} f_{\mathfrak{S} \mid \mathfrak{F}}(S \mid F) f_{\mathfrak{F}}(F) \rd F \\[-1mm]
&\hspace{20mm}\times \int_{\mathcal{S}_{++}^d} f_{\mathfrak{T} \mid \mathfrak{G}}(T \mid G) f_{\mathfrak{G}}(G) \rd G \times \int_{\mathcal{S}_{++}^d} f_{\mathfrak{U} \mid \mathfrak{H}}(U \mid H) f_{\mathfrak{H}}(H) \rd H \times f_{\mathfrak{V}}(V) \\
&= f_{\mathfrak{S}}(S) \times f_{\mathfrak{T}}(T) \times f_{\mathfrak{U}}(U) \times f_{\mathfrak{V}}(V),
\end{aligned}
\]
so that $\mathfrak{S},\mathfrak{T},\mathfrak{U},\mathfrak{V}$ are mutually independent. Applying Corollary~\ref{cor:main} three times with
\[
\begin{aligned}
\mbox{(i)} & ~~ \mathfrak{X} = \mathfrak{S}, ~ \mathfrak{Y} = \mathfrak{F}, ~ \nu = a-1, ~ A = \Sigma, ~ \Sigma = b n \Sigma_{\bb{\alpha}}, ~ H = \Sigma^{-1}, \\
\mbox{(ii)} & ~~ \mathfrak{X} = \mathfrak{T}, ~ \mathfrak{Y} = \mathfrak{G}, ~ \nu = b-1, ~ A = \Sigma, ~ \Sigma = a n \Sigma_{\bb{\beta}}, ~ H = \Sigma^{-1}, \\
\mbox{(iii)} & ~~ \mathfrak{X} = \mathfrak{U}, ~ \mathfrak{Y} = \mathfrak{H}, ~ \nu = (a-1)(b-1), ~ A = \Sigma, ~ \Sigma = n \Sigma_{\bb{\alpha\beta}}, ~ H = \Sigma^{-1},
\end{aligned}
\]
one deduces that
\[
\begin{aligned}
\mbox{(i)} & ~~\mathfrak{S} \sim \mathcal{W}_d\{a-1, \Sigma^{1/2} (I_d + b n \Sigma^{-1/2} \Sigma_{\bb{\alpha}} \Sigma^{-1/2}) \Sigma^{1/2}\} \equiv \mathcal{W}_d(a-1, \Sigma + b n \Sigma_{\bb{\alpha}}), \\
\mbox{(ii)} & ~~\mathfrak{T} \sim \mathcal{W}_d\{b-1, \Sigma^{1/2} (I_d + a n \Sigma^{-1/2} \Sigma_{\bb{\beta}} \Sigma^{-1/2}) \Sigma^{1/2}\} \equiv \mathcal{W}_d(b-1, \Sigma + a n \Sigma_{\bb{\beta}}), \\
\mbox{(iii)} & ~~\mathfrak{U} \sim \mathcal{W}_d\{(a-1)(b-1), \Sigma^{1/2} (I_d + n \Sigma^{-1/2} \Sigma_{\bb{\alpha\beta}} \Sigma^{-1/2}) \Sigma^{1/2}\} \equiv \mathcal{W}_d\{(a-1)(b-1), \Sigma + n \Sigma_{\bb{\alpha\beta}}\}.
\end{aligned}
\]
The hypotheses $\mathcal{H}_0^A$, $\mathcal{H}_0^B$, and $\mathcal{H}_0^{AB}$ in \eqref{eq:hyp} can then be tested, respectively, using the facts that
\begin{equation}\label{eq:random.case.statistics}
\begin{aligned}
\mbox{(i)} & ~~ (\mathfrak{V}_{\Sigma^{-1}})^{-1/2} \mathfrak{S}_{\Sigma^{-1}} (\mathfrak{V}_{\Sigma^{-1}})^{-1/2} \stackrel{\mathcal{H}_0^A}{\sim} \mathcal{B}_d\{(a-1)/2, a b (n-1)/2\}, \\[-1mm]
\mbox{(ii)} & ~~ (\mathfrak{V}_{\Sigma^{-1}})^{-1/2} \mathfrak{T}_{\Sigma^{-1}} (\mathfrak{V}_{\Sigma^{-1}})^{-1/2} \stackrel{\mathcal{H}_0^B}{\sim} \mathcal{B}_d\{(b-1)/2, a b (n-1)/2\}, \\[-1mm]
\mbox{(iii)} & ~~ (\mathfrak{V}_{\Sigma^{-1}})^{-1/2} \mathfrak{U}_{\Sigma^{-1}} (\mathfrak{V}_{\Sigma^{-1}})^{-1/2} \hspace{-2mm}\stackrel{~~\mathcal{H}_0^{AB}}{\sim}\hspace{-0.5mm} \mathcal{B}_d\{(a-1)(b-1)/2, a b (n-1)/2\},
\end{aligned}
\end{equation}
where $\mathcal{B}_d(\nu_1/2,\nu_2/2)$ denotes the matrix-variate Beta Type~II distribution \citep[Definition~5.2.2]{Gupta_Nagar_1999}, for $\nu_1$, $\nu_2\in (d-1,\infty)$. It is also known as the matrix-variate $F$ distribution and corresponds to the distribution of $\smash{\mathfrak{S}_2^{-1/2} \mathfrak{S}_1 \mathfrak{S}_2^{-1/2}}$, assuming that $\mathfrak{S}_1\sim \mathcal{W}_d(\nu_1, I_d)$ and $\mathfrak{S}_2\sim \mathcal{W}_d(\nu_2,I_d)$ are independent; see, e.g., \citet[Theorem~5.2.5]{Gupta_Nagar_1999}.

\begin{remark}
If some of the factors have fixed effects, the same test statistics in \eqref{eq:random.case.statistics} can be used to test whether the corresponding factors are significant. For example, if $A$ is assumed to have fixed effects, then the hypothesis to test becomes $\mathcal{H}_0^A : F = 0_{d\times d}$, and in view of \eqref{eq:fixed.case.laws}, we still have
\[
(\mathfrak{V}_{\Sigma^{-1}})^{-1/2} \mathfrak{S}_{\Sigma^{-1}} (\mathfrak{V}_{\Sigma^{-1}})^{-1/2} \stackrel{\mathcal{H}_0^A}{\sim} \mathcal{B}_d\{(a-1)/2, a b (n-1)/2\}.
\]
\end{remark}

\subsection{First real-data example}\label{sec:example.1}

The methodology described in Section~\ref{sec:methodology} was applied to two biomarkers from the 1999--2004 US National Health and Nutrition Examination Survey (NHANES) survey \citep{doi:10.32614/CRAN.package.NHANES}: body-mass index (BMI) and total serum cholesterol (TotChol). Two survey descriptors were treated as having random effects: Education with five ordered categories (8th Grade, 9--11th Grade, High School, Some College, College Graduate) and Marital Status with six categories (Married, Live With Partner, Divorced, Separated, Widowed, Never Married). A balanced $5\times6\times5$ design was obtained by selecting $n=5$ subjects per cell, giving $N=150$ independent observations each having $d=2$ components.

The sums-of-outer-products were computed exactly as in Section \ref{sec:methodology}. For simplicity, the error covariance $\Sigma$ is taken to be any scale matrix, in which case it cancels out in the three statistics in \eqref{eq:random.case.statistics}. The three statistics were evaluated and their $p$-values estimated using $10^4$ Monte Carlo draws from a matrix-variate Beta Type II distribution, providing the correct finite-sample calibration when both factors have random effects. For comparison, univariate two-way tests of zero variance components were conducted on BMI and TotChol using the exact $F$ procedures of \citet{arXiv:2103.13202}, who proved that the usual $F$ statistic under the null retains its law under random effects in balanced designs. The results appear in Table~\ref{tab:1}.

\begin{table}[ht]
\centering
\caption{Balanced NHANES design: Education ($A$, $a=5$) and Marital Status ($B$, $b=6$) with $n=5$ observations per cell ($N=abn=150$, $d=2$). The table reports $p$‐values for $A$, $B$, and $AB$ from the multivariate and univariate approaches.}
\label{tab:1}
\begin{tabular}{lccc}
\toprule
& $p_A$ & $p_B$ & $p_{AB}$ \\\midrule
Beta Type II MANOVA on (BMI, TotChol) & 0.4993 & 0.9556 & 0.0704 \\\midrule
Variance-component $F$~test on BMI & 0.0577 & 0.2910 & 0.0008 \\
Variance-component $F$~test on TotChol & 0.3035 & 0.4935 & 0.0047 \\\bottomrule
\end{tabular}
\end{table}

Our multivariate procedure shows no evidence that Education or Marital Status alone contributes to the BMI-TotChol covariance structure, while the interaction is marginally significant at the 10\% level. In contrast, the univariate variance-component tests suggest a borderline Education effect for BMI at the 5\% level and highlight a strong Education-Marital Status interaction for both biomarkers.

While BMI and total cholesterol levels are often positively correlated, this relationship can vary across subgroups defined by education and marital status due to differences in age, sex, health-related behaviors, and other factors. Moreover, although the underlying correlation may be present, the relatively small number of observations per cell ($n = 5$) may result in a noisy empirical covariance structure within each group, making the joint signal less clearly defined. As a result, the matrix-variate Beta Type II test may have reduced power to detect interaction effects that are readily captured by separate univariate tests.

These discrepancies illustrate that multivariate covariance-based inference can reach different conclusions from univariate analyses, highlighting the importance of analyzing the joint biomarker profile alongside the individual components.

\subsection{Second real-data example}\label{sec:example.2}

To illustrate the method on a different dataset, we applied the Beta Type II MANOVA to two continuous variables from the \texttt{diamonds} dataset in the \texttt{ggplot2} package \citep{doi:10.32614/CRAN.package.ggplot2}: Carat and Price. Two categorical attributes were treated as factors having random effects: Cut (Fair, Good, Very Good, Premium, Ideal) and Color (levels from J to D in decreasing order of yellow tint).

A balanced $5\times7\times3$ design was constructed by selecting $n = 3$ diamonds per cell, for a total of $N = 105$ independent observations each having $d=2$ components. As in the first real-data example of Section~\ref{sec:example.1}, $\Sigma$ is taken to be any scale matrix and the $p$-values of the three statistics in \eqref{eq:random.case.statistics} are estimated using $10^4$ Monte Carlo draws. The results appear in Table~\ref{tab:2}.

\begin{table}[ht]
\centering
\caption{Balanced diamonds design: Cut ($A$, $a=5$) and Color ($B$, $b=7$) with $n=3$ observations per cell ($N=abn=105$, $d=2$). The table reports $p$‐values for $A$, $B$, and $AB$ from the multivariate and univariate approaches.}
\label{tab:2}
\begin{tabular}{lccc}
\toprule
& $p_A$ & $p_B$ & $p_{AB}$ \\\midrule
Beta Type II MANOVA on (Carat, Price) & 0.0000 & 0.0000 & 0.0153 \\\midrule
Variance-component $F$~test on Carat & 0.0000 & 0.0526 & 0.0056 \\
Variance-component $F$~test on Price & 0.0000 & 0.0054 & 0.0115 \\\bottomrule
\end{tabular}
\end{table}

In this example, the multivariate analysis reveals highly significant effects of Cut and Color on the joint distribution of Carat and Price, as well as a notable interaction effect between the two factors. While the univariate tests also detect strong effects overall, their conclusions are less uniform; for instance, the effect of Color on Carat is only borderline significant at the 5\% level. In contrast, the multivariate procedure detects a more pronounced overall influence of Color when both variables are considered simultaneously, highlighting a joint structure not fully captured by the marginal tests.

This distinction underscores the value of multivariate covariance-based inference in uncovering interactions and main effects that may be diluted or obscured in separate univariate analyses. Compared to the NHANES setting in Section~\ref{sec:example.1}, where multivariate signals were weaker, the present example more clearly illustrates the capacity of the matrix-variate Beta Type II method to detect meaningful factor effects on multivariate outcomes.

\section*{Reproducibility}

The \textsf{R} codes that produced the results of the real-data examples in Sections~\ref{sec:example.1}--\ref{sec:example.2} are publicly available in the GitHub repository of \citet{GenestMacKayOuimet2025github}.

\section*{Acknowledgments}
\addcontentsline{toc}{section}{Acknowledgments}

We thank Donald Richards (Penn State University) for enlightening discussions.

\section*{Funding}
\addcontentsline{toc}{section}{Funding}

Genest's work was funded by the Natural Sciences and Engineering Research Council of Canada (Grant RGPIN-2024-04088) and the Canada Research Chairs Program (Grant 950-231937). MacKay's work was funded by the Natural Sciences and Engineering Research Council of Canada (Grant RGPIN-2024-05794). The first version of this work was completed while Ouimet was a Research Associate at McGill University and a postdoctoral fellow at the Universit\'e de Sherbrooke. These positions were funded through Genest's and MacKay's research grants, respectively.

\addcontentsline{toc}{chapter}{References}

\bibliographystyle{authordate1}
\bibliography{GMO_bib}

\begin{thebibliography}{}

\bibitem[\protect\citename{Anderson, }2003]{MR1990662}
Anderson, T.~W. 2003.
\newblock {\em An {I}ntroduction to {M}ultivariate {S}tatistical {A}nalysis}.
  Third edn.
\newblock Wiley Series in Probability and Statistics.
\newblock Wiley-Interscience [John Wiley \& Sons], Hoboken, NJ.
\newblock \href{http://www.ams.org/mathscinet-getitem?mr=MR1990662}{MR1990662}.

\bibitem[\protect\citename{Bilodeau, }2022]{arXiv:2103.13202}
Bilodeau, M. 2022.
\newblock Anova of balanced variance component models.
\newblock {\em ArXiv},  6 pp.
\newblock \href{https://arxiv.org/abs/2103.13202}{arXiv:2103.13202v3}.

\bibitem[\protect\citename{Boukehil {\em et~al.\ }\relax, }2021]{MR4260037}
Boukehil, D., Fourdrinier, D., Mezoued, F., \& Strawderman, W.~E. 2021.
\newblock Estimation of the inverse scatter matrix for a scale mixture of
  {W}ishart matrices under {E}fron--{M}orris type losses.
\newblock {\em J. Statist. Plann. Inference}, {\bf 215}, 368--387.
\newblock \href{http://www.ams.org/mathscinet-getitem?mr=MR4260037}{MR4260037}.

\bibitem[\protect\citename{Chaudhari {\em et~al.\ }\relax,
  }2022]{doi:10.1080/01431161.2022.2054293}
Chaudhari, N., Mitra, S.~K., Mandal, S., Chirakkal, S., Putrevu, D., \& Misra,
  A. 2022.
\newblock Edge-preserving classification of polarimetric {SAR} images using
  {W}ishart distribution and conditional random field.
\newblock {\em Int. J. Remote Sens.}, {\bf 43}(6), 2134--2155.
\newblock
  \href{https://doi.org/10.1080/01431161.2022.2054293}{doi:10.1080/01431161.2022.2054293}.

\bibitem[\protect\citename{Genest {\em et~al.\ }\relax,
  }2025]{GenestMacKayOuimet2025github}
Genest, C., MacKay, A., \& Ouimet, F. 2025.
\newblock {\em {N}oncentral{W}ishart{M}ixtures}.
\newblock GitHub repository available online at
  \href{https://github.com/FredericOuimetMcGill/NoncentralWishartMixtures}{https://github.com/FredericOuimetMcGill/NoncentralWishartMixtures}.

\bibitem[\protect\citename{Gupta \& Nagar, }1999]{Gupta_Nagar_1999}
Gupta, A.~K., \& Nagar, D.~K. 1999.
\newblock {\em Matrix {V}ariate {D}istributions}. First edn.
\newblock Chapman and Hall/CRC.

\bibitem[\protect\citename{Haff {\em et~al.\ }\relax, }2011]{MR3012414}
Haff, L.~R., Kim, P.~T., Koo, J.-Y., \& Richards, D. St.~P. 2011.
\newblock Minimax estimation for mixtures of {W}ishart distributions.
\newblock {\em Ann. Statist.}, {\bf 39}(6), 3417--3440.
\newblock \href{http://www.ams.org/mathscinet-getitem?mr=MR3012414}{MR3012414}.

\bibitem[\protect\citename{Jian \& Vemuri, }2007]{doi:10.1109/TMI.2007.907552}
Jian, B., \& Vemuri, B.~C. 2007.
\newblock A unified computational framework for deconvolution to reconstruct
  multiple fibers from diffusion weighted {MRI}.
\newblock {\em IEEE Trans. Med. Imaging}, {\bf 26}(11), 1464--1471.
\newblock
  \href{https://doi.org/10.1109/TMI.2007.907552}{doi:10.1109/TMI.2007.907552}.

\bibitem[\protect\citename{Jian {\em et~al.\ }\relax,
  }2007]{doi:10.1016/j.neuroimage.2007.03.074}
Jian, B., Vemuri, B.~C., {\"O}zarslan, E., Carney, P.~R., \& Mareci, T.~H.
  2007.
\newblock A novel tensor distribution model for the diffusion-weighted {MR}
  signal.
\newblock {\em NeuroImage}, {\bf 37}(1), 164--176.
\newblock
  \href{https://doi.org/10.1016/j.neuroimage.2007.03.074}{doi:10.1016/j.neuroimage.2007.03.074}.

\bibitem[\protect\citename{Jian {\em et~al.\ }\relax, }2009]{MR2596585}
Jian, B., Vemuri, B.~C., \& {\"O}zarslan, E. 2009.
\newblock A mixture of {W}isharts ({MOW}) model for multifiber reconstruction.
\newblock {\em Pages  39--56 of:} Weickert, J., \& Hagen, H. (eds), {\em
  Visualization and {P}rocessing of {T}ensor {F}ields}.
\newblock Math. Vis.
\newblock Springer, Berlin.
\newblock
  \href{https://doi.org/10.1007/978-3-540-88378-4_3}{doi:10.1007/978-3-540-88378-4\_3}.

\bibitem[\protect\citename{Jones \& Marchand, }2021]{MR4319008}
Jones, M.~C., \& Marchand, \'{E}. 2021.
\newblock A (non-central) chi-squared mixture of non-central chi-squareds is
  (non-central) chi-squared and related results, corollaries and applications.
\newblock {\em Stat}, {\bf 10}, Paper No. e398, 7 pp.
\newblock \href{http://www.ams.org/mathscinet-getitem?mr=MR4319008}{MR4319008}.

\bibitem[\protect\citename{Kim \& Richards, }2011]{MR2838725}
Kim, P.~T., \& Richards, D. St.~P. 2011.
\newblock Deconvolution density estimation on the space of positive definite
  symmetric matrices.
\newblock {\em Pages  147--168 of:} Hunter, D.~R., Richards, D. St.~P., \&
  Rosenberger, J.~L. (eds), {\em Nonparametric {S}tatistics and {M}ixture
  {M}odels}.
\newblock World Sci. Publ., Hackensack, NJ.
\newblock \href{http://www.ams.org/mathscinet-getitem?mr=MR2838725}{MR2838725}.

\bibitem[\protect\citename{Lawley, }1938]{doi:10.2307/2332232}
Lawley, D.~N. 1938.
\newblock A generalization of {F}isher's {$z$} test.
\newblock {\em Biometrika}, {\bf 30}(1--2), 180--187.
\newblock \href{https://doi.org/10.2307/2332232}{doi:10.2307/2332232}.

\bibitem[\protect\citename{Mardia {\em et~al.\ }\relax, }1979]{MR560319}
Mardia, K.~V., Kent, J.~T., \& Bibby, J.~M. 1979.
\newblock {\em Multivariate {A}nalysis}.
\newblock Probability and Mathematical Statistics: A Series of Monographs and
  Textbooks.
\newblock Academic Press [Harcourt Brace Jovanovich, Publishers], London-New
  York-Toronto.
\newblock \href{http://www.ams.org/mathscinet-getitem?mr=MR560319}{MR560319}.

\bibitem[\protect\citename{Montgomery, }2009]{MR2552961}
Montgomery, D.~C. 2009.
\newblock {\em Design and {A}nalysis of {E}xperiments}. Seventh edn.
\newblock John Wiley \& Sons, Inc., Hoboken, NJ.
\newblock \href{http://www.ams.org/mathscinet-getitem?mr=MR2552961}{MR2552961}.

\bibitem[\protect\citename{Muirhead, }1982]{MR652932}
Muirhead, R.~J. 1982.
\newblock {\em Aspects of {M}ultivariate {S}tatistical {T}heory}.
\newblock John Wiley \& Sons, Inc., New York.
\newblock \href{http://www.ams.org/mathscinet-getitem?mr=MR652932}{MR652932}.

\bibitem[\protect\citename{Pillai, }1955]{MR67429}
Pillai, K. C.~S. 1955.
\newblock Some new test criteria in multivariate analysis.
\newblock {\em Ann. Math. Statist.}, {\bf 26}, 117--121.
\newblock \href{http://www.ams.org/mathscinet-getitem?mr=MR67429}{MR67429}.

\bibitem[\protect\citename{Pruim, }2015]{doi:10.32614/CRAN.package.NHANES}
Pruim, R. 2015.
\newblock {\em {NHANES}: {D}ata from the {US} {N}ational {H}ealth and
  {N}utrition {E}xamination {S}tudy (1999--2004)}.
\newblock \textsf{R} package, version 2.1.0.
\newblock
  \href{https://www.doi.org/10.32614/CRAN.package.NHANES}{doi:10.32614/CRAN.package.NHANES}.

\bibitem[\protect\citename{Ren \& Barnett, }2022]{MR4524855}
Ren, B., \& Barnett, I. 2022.
\newblock Autoregressive mixture models for clustering time series.
\newblock {\em J. Time Series Anal.}, {\bf 43}(6), 918--937.
\newblock \href{http://www.ams.org/mathscinet-getitem?mr=MR4524855}{MR4524855}.

\bibitem[\protect\citename{Rencher \& Christensen, }2012]{MR2962097}
Rencher, A.~C., \& Christensen, W.~F. 2012.
\newblock {\em Methods of {M}ultivariate {A}nalysis}. Third edn.
\newblock Wiley Series in Probability and Statistics.
\newblock John Wiley \& Sons, Inc., Hoboken, NJ.
\newblock \href{http://www.ams.org/mathscinet-getitem?mr=MR2962097}{MR2962097}.

\bibitem[\protect\citename{Roy, }1953]{MR57519}
Roy, S.~N. 1953.
\newblock On a heuristic method of test construction and its use in
  multivariate analysis.
\newblock {\em Ann. Math. Statistics}, {\bf 24}, 220--238.
\newblock \href{http://www.ams.org/mathscinet-getitem?mr=MR57519}{MR57519}.

\bibitem[\protect\citename{Shakya {\em et~al.\ }\relax,
  }2017]{doi:10.1007/978-3-319-61358-1_12}
Shakya, S., Batool, N., {\"O}zarslan, E., \& Knutsson, H. 2017.
\newblock Multi-fiber reconstruction using probabilistic mixture models for
  diffusion {MRI} examinations of the brain.
\newblock {\em Pages  283--308 of:} Schultz, T., {\"O}zarslan, E., \& Hotz, I.
  (eds), {\em Modeling, Analysis, and Visualization of Anisotropy}.
\newblock Cham: Springer International Publishing.
\newblock
  \href{https://doi.org/10.1007/978-3-319-61358-1_12}{doi:10.1007/978-3-319-61358-1\_12}.

\bibitem[\protect\citename{Wickham {\em et~al.\ }\relax,
  }2025]{doi:10.32614/CRAN.package.ggplot2}
Wickham, H., Chang, W., Henry, L., Pedersen, T.~L., Takahashi, K., Wilke, C.,
  Woo, K., Yutani, H., Dunnington, D., \& van~den Brand, T. 2025.
\newblock {\em \texttt{ggplot2}: Create Elegant Data Visualisations Using the
  Grammar of Graphics}.
\newblock \textsf{R} package, version 3.5.2.
\newblock
  \href{https://www.doi.org/10.32614/CRAN.package.ggplot2}{doi:10.32614/CRAN.package.ggplot2}.

\bibitem[\protect\citename{Wilks, }1932]{doi:10.1093/biomet/24.3-4.471}
Wilks, S.~S. 1932.
\newblock Certain generalizations in the analysis of variance.
\newblock {\em Biometrika}, {\bf 24}(3--4), 471--494.
\newblock
  \href{https://www.doi.org/10.1093/biomet/24.3-4.471}{doi:10.1093/biomet/24.3-4.471}.

\bibitem[\protect\citename{Yang {\em et~al.\ }\relax,
  }2016]{doi:10.1109/TGRS.2016.2590145}
Yang, W., Yang, X., Yan, T., Song, H., \& Xia, G.-S. 2016.
\newblock Region-based change detection for polarimetric {SAR} images using
  {W}ishart mixture models.
\newblock {\em IEEE Trans. Geosci. Remote Sens.}, {\bf 54}(11), 6746--6756.
\newblock
  \href{https://doi.org/10.1109/TGRS.2016.2590145}{doi:10.1109/TGRS.2016.2590145}.

\end{thebibliography}

\end{document}